\numberwithin{equation}{section}
\newtheorem{theorem}{Theorem}[section]
\newtheorem{lemma}[theorem]{Lemma}
\theoremstyle{remark}
\DeclareMathOperator{\RE}{Re} 
\begin{document}

\title[First Order Differential Subordination]{First Order Differential Subordination for Functions with Positive Real Part}

\author[O. P. Ahuja]{Om P. Ahuja}
\address{Department of Mathematics, Kent State University, Burton, USA}
\email{oahuja@kent.edu}

\author[S. Kumar]{Sushil Kumar}
\address{Bharati Vidyapeeth's College of Engineering,  Delhi--110063, India}
\email{sushilkumar16n@gmail.com}

\author{V. Ravichandran}
\address{Department of Mathematics, University of Delhi, Delhi--110007, India}
\email{vravi@maths.du.ac.in; vravi68@gmail.com}

\begin{abstract}
Sharp estimates  on $\beta$ are determined  so that an analytic function $p$ defined  on the open unit disk in the complex plane normalized by $p(0)=1$ is subordinate to some well known starlike functions with positive real part whenever  $1+\beta z p'(z), \,\,1+\beta z p'(z)/p(z), \,\,\mbox{or}\,\,1+\beta z p'(z)/p^{2}(z)$ is subordinate to $\sqrt{1+z}$.
Our results provide sharp version of previously known results.
\end{abstract}

\keywords{Differential subordination; starlike function; functions with positive real part;  Janowski function.}

\subjclass[2010]{30C45, 30C80}

\maketitle

\section{Introduction}\label{sec1}
Let $\mathcal{A}$ denote the class of analytic functions $f$ on  the disk $\mathbb{D}:= \{ z\in \mathbb{C}: |z|<1\}$ and  normalized by the  condition $f(0)=0=f'(0)-1$. Let $\mathcal{S}$ be the subset of $\mathcal{A}$ of univalent functions. An analytic function $f$ defined on $\mathbb{D}$ is subordinate to the analytic function $g$  on $\mathbb{D}$ (or $g$ is superordinate to $f$), if there exists an analytic function $w: \mathbb{D}\to  \mathbb{D}$, with $w(0)=0$, such that $f=g\circ w$.  Furthermore, if $g$ is univalent in $\mathbb{D}$, then $f \prec g$ is equivalent to $f(0) = g(0)$ and $f(\mathbb{D})\subseteq g(\mathbb{D})$, see \cite{o}. Let $p$ be an analytic function on  $\mathbb{D}$ normalized by $p(0)=1$. Goluzin \cite{Goluzin} discussed the  first order differential subordination $zp'(z) \prec zq'(z)$ and proved that, whenever $zq'(z)$ is convex,  the subordination $p(z)\prec q(z)$ holds   and the  function $q$ is best dominant. After this  basic result, many authors established several generalizations of first order differential subordination. The general theory of  differential subordination is discussed in the monograph by Miller and Mocanu \cite{Miller85}.

In 1989, Nunokawa \emph{et al.}\ \cite{p} proved that if subordination $1+z p'(z)\prec 1+z$ holds, then subordination $p(z)\prec 1+z$ also holds. In 2007,  Ali \emph{et al.}\ \cite{q} extended this result and determined the estimates on $\beta$ for which the subordination $1+ \beta z p'(z)/p^j(z)\prec (1+Dz)/(1+Ez)$ $(j=0,1,2)$ implies the subordination $p(z)\prec (1+Az)/(1+Bz)$, where $A,B,D,E \in [-1,1]$. In 2013, Omar  and  Halim \cite{r} determined the condition on $\beta$ in terms of complex number $D$ and real $E$ with $-1<E<1$ and $|D| \leq 1$ such that $1+\beta zp'(z)/p^j(z) \prec (1+Dz)/(1+Ez)$ $(j=0,1,2)$ implies $p(z) \prec \sqrt{1+z}$. Recently, Kumar and Ravichandran  \cite{sr} determined some sufficient conditions for certain first order differential subordinations to imply that the corresponding  analytic solution  is subordinate to a rational, exponential, or sine  function. For more details,  see \cite{vrk, sharma, nikol,cho}. The function $\sqrt{1+z}$ is associated with the class $\mathcal{S}_{L}^*$, introduced by Sok\'o\l\ and Stankiewicz \cite{b}. This class consists of the function $f \in \mathcal{A}$ such that $w(z):=zf'(z)/f(z)$ lies in the region bounded by the right half  of the lemniscate of Bernoulli given by  $|w^2-1|<1$. The lemniscate of Bernoulli is a best known plane curve resembling the symbol $\infty$. It was named after James Bernoulli who considered it in
elasticity theory in 1694. In geometry, the lemniscate is a plane curve defined by two given points $F_1$ and $F_2$, known as foci, at distance $2a$ from each other as the locus of points P so that $P F_1 . P F_2=a^2$. The equation of lemniscate may be written as $(x^2+y^2)^2=2a^2(x^2-y^2)$. The lemniscate in the complex plane is the locus of $z=x+iy$ such that $|z^2-a^2|=a^2$.

For an analytic function $p(z)=1+c_1z+c_2z^2+\cdots$, we determine the sharp bound on $\beta$  so that
 $p(z)\prec \mathcal{P}(z)$ where $\mathcal{P}(z)$ is a function with positive real part like $\sqrt{1+z}$, $(1+Az)/(1+Bz)$, $ e^z$, 
 $\varphi_0(z):=1+ \frac{z}{k}\left((k+z)/(k-z)\right)\,(k = \sqrt{2}+1)$, $\varphi_{sin}(z):= 1+\sin  z$, $\varphi_{C}(z):= 1+\frac{4}{3}z+\frac{2}{3}z^2$ and $\varphi_{\mbox{\tiny}\leftmoon}(z):=z+\sqrt{1+z^2}$, whenever $1+ \beta {z p'(z)}/{p^{j}(z)}\prec \sqrt{1+z},\,\,(j=0,1,2)$.
Many of our subordination results in this paper improve the corresponding non-sharp results obtained by earlier authors in \cite{c,svr,m}. Our results are sharp.

\section{Main Results}\label{sec2}
In 1985, Padmanabhan and Parvatham \cite{uni} introduced  a unified classes of starlike and convex functions using convolution  with the function of the form $z/(1-z)^{\alpha}$, $\alpha\in\mathbb{R}$.
Later, Shanmugam \cite{shan} considered the class $\mathcal{S}_g^*(h)$ of all $f\in{\mathcal{A}}$ satisfying $z(f*g)'/(f*g) \prec h$ where $h$ is a convex function, $g$ is a fixed function in $\mathcal{A}$. Denote by $\mathcal{S}^*(h)$ and $\mathcal{K}(h)$, the subclass $\mathcal{S}_g^*(h)$, when $g$ is $z/(1-z)$ and $z/(1-z)^2$ respectively.  In 1992, Ma and Minda \cite{a} considered  a weaker assumption that $h$ is a function with positive real part whose range is symmetric with respect to real axis and starlike with respect to $h(0)=1$ with $h'(0)>0$ and proved  distortion, growth, and covering theorems. The class  $\mathcal{S}^*(h)$ generalizes many subclasses of $\mathcal{A}$, for example,
$\mathcal{S^*}[A,\,B]:=\mathcal{S^*} ({(1+Az)}/{(1+Bz)})$ $ (-1\leq B<A\leq 1)$  \cite{j},  $\mathcal{S}_{L}^*:=\mathcal{S}^*(\sqrt{1+z})$  \cite{b}, $\mathcal{S}_e^*:=\mathcal{S}^*(e^z)$   \cite{m}, $\mathcal{S}^*_{sin}:=\mathcal{S}^*(\varphi_{sin}(z))$  \cite{sinf}, $\mathcal{S}^*_{C}:=\mathcal{S}^*(\varphi_{C}(z))$ \cite{afmc}, $ \mathcal{S}^*_{R}:=\mathcal{S}^*(\varphi_0(z))$ \cite{n},
and $\mathcal{S}^*_{\mbox{\tiny}\leftmoon}:=\mathcal{S}^*(\varphi_{\mbox{\tiny}\leftmoon}(z))$ \cite{lune1,lune2}.

Several sufficient conditions for functions to belong to the above defined classes can be obtained as an application of the following subordination results involving the   lemniscate of Bernoulli and other well known starlike functions with positive real part.
Our first result gives a bound on $\beta$ so that $1+\beta z p'(z) \prec \sqrt{1+z}$ implies that the function $p$ is subordinate to several well-known starlike functions.

\begin{theorem}\label{lemth1}
Let  the function $p$ be analytic in $\mathbb{D}$, $p(0)=1$ and $1+\beta z p'(z) \prec \sqrt{1+z}$.
Then the following subordination results hold:
\begin{itemize}
\item[(a)] If   $\beta \geq\frac{2(\sqrt{2}-1+\log 2-\log(1+\sqrt{2}))}{\sqrt{2}-1}\approx 1.09116$,  then  $p(z) \prec \sqrt{1+z}$.
\item[(b)] If  $\beta \geq \frac{2(1-\log{2})}{3-2\sqrt{2}}\approx 3.57694$,  then $p(z) \prec \varphi_0(z)$.
\item[(c)] If   $\beta \geq\frac{2 (1-\log 2)}{\sin (1)}\approx 0.729325$,   then $p(z) \prec \varphi_{sin}(z)$.
\item[(d)] If   $\beta \geq (2+\sqrt{2}) (1-\log 2)\approx 1.044766$,  then $p(z) \prec \varphi_{\mbox{\tiny}\leftmoon}(z)$.
\item[(e)]  If  $\beta \geq 3(1-\log{2})\approx 0.920558$,  then $p(z) \prec \varphi_C(z)$.
\item[(f)] Let  $-1<B<A<1$ and $B_0=\frac{2-\log 4-\sqrt{2}+\log(1+\sqrt{2})}{\sqrt{2}-\log(1+\sqrt{2}+1)}\approx 0.151764$. If either
\begin{itemize}
\item[(i)]  $B<B_0$ and $\beta \geq \frac{2(1-B)(1-\log 2)}{A-B}\approx 0.613706\frac{1-B}{A-B}$ or
\item[(ii)] $B>B_0$ and  $\beta \geq  \frac{2(1+B)(\sqrt{2}-1+\log 2-\log(1+\sqrt{2}))}{A-B}\approx0.451974\frac{1+B}{A-B}$, \\
 then $p(z) \prec {(1+Az)}/{(1+Bz)}$.
\end{itemize}
\end{itemize}
The bounds on $\beta$ are sharp.
\end{theorem}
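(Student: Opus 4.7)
My plan is to reduce the differential subordination to an image containment question for a single sharp dominant. Writing the hypothesis as $\beta zp'(z)=\sqrt{1+w(z)}-1$ for a Schwarz function $w$ and integrating with $p(0)=1$, the extremal choice $w(t)=t$ produces the candidate dominant
$$q_\beta(z):=1+\frac{1}{\beta}\int_0^z\frac{\sqrt{1+t}-1}{t}\,dt=1+\frac{2}{\beta}\left[\sqrt{1+z}-1-\log\frac{1+\sqrt{1+z}}{2}\right],$$
the closed form coming from the substitution $u=\sqrt{1+t}$. Since $q_\beta$ is univalent and satisfies $1+\beta zq_\beta'(z)=\sqrt{1+z}$, a standard first-order differential subordination argument (using the star-likeness of the lemniscate region with respect to $1$ and a Jack-type lemma) gives $p\prec q_\beta$. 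The six parts of the theorem then reduce to the image-containment statement $q_\beta(\mathbb{D})\subseteq\mathcal{P}(\mathbb{D})$ for each target $\mathcal{P}$ listed in (a)--(f).

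Because $q_\beta$ has real Taylor coefficients, its image is symmetric about $\mathbb{R}$, and its real-axis range is
$$\left[\,1-\tfrac{2(1-\log 2)}{\beta},\ 1+\tfrac{2(\sqrt{2}-1+\log 2-\log(1+\sqrt{2}))}{\beta}\,\right].$$
Each $\mathcal{P}(\mathbb{D})$ is similarly symmetric with real-axis section $[\mathcal{P}(-1),\mathcal{P}(1)]$, immediately computable: $[0,\sqrt{2}]$ for (a), $[2\sqrt{2}-2,2]$ for (b), $[1-\sin 1,1+\sin 1]$ for (c), $[\sqrt{2}-1,\sqrt{2}+1]$ for (d), $[1/3,3]$ for (e), and $[(1-A)/(1-B),(1+A)/(1+B)]$ for (f). Requiring $\mathcal{P}(-1)\le q_\beta(-1)$ and $q_\beta(1)\le\mathcal{P}(1)$ yields two explicit lower bounds on $\beta$; the stated threshold in each part is the larger of the two. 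In (a) the right-endpoint constraint binds, in (b)--(e) the left endpoint binds, and in (f) the cutoff $B_0$ is precisely the value of $B$ at which the two constraints coincide, matching the formula in the statement after elementary algebra.

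The main obstacle is promoting the real-axis comparison to the full inclusion $q_\beta(\mathbb{D})\subseteq\mathcal{P}(\mathbb{D})$. For this I would argue geometrically case by case: each target region has a known boundary description (the lemniscate of Bernoulli, a Janowski disk, a sine-curve image, the cardioid $\varphi_C(\mathbb{D})$, the crescent $\varphi_{\leftmoon}(\mathbb{D})$, and the rational $\varphi_0(\mathbb{D})$) and one verifies that $q_\beta(\partial\mathbb{D})$, a closed curve symmetric about the real axis with real extremes $q_\beta(\pm 1)$, nests inside $\mathcal{P}(\partial\mathbb{D})$ at the threshold, making initial contact on the real axis. Sharpness is then immediate: taking $p=q_\beta$ at the threshold, $q_\beta(\pm 1)\in\partial\mathcal{P}(\mathbb{D})$, so any smaller $\beta$ pushes $q_\beta(\mathbb{D})$ outside $\mathcal{P}(\mathbb{D})$ and the target subordination fails.
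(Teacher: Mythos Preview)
Your proposal is correct and follows essentially the same route as the paper: construct the explicit dominant $q_\beta$ solving $1+\beta z q_\beta'(z)=\sqrt{1+z}$, invoke a first-order subordination principle (the paper cites the Miller--Mocanu lemma, equivalent to your Jack-type argument via the starlikeness of $\sqrt{1+z}-1$) to obtain $p\prec q_\beta$, and then reduce $q_\beta\prec\mathcal{P}$ to the two real-endpoint inequalities $\mathcal{P}(-1)\le q_\beta(-1)$ and $q_\beta(1)\le\mathcal{P}(1)$, taking the larger threshold. The paper treats the promotion from real-axis comparison to full image containment exactly as informally as you do, simply stating that it ``can be seen by looking at the graph of the respective functions.''
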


In proving our results, the following lemma will be  needed.
\begin{lemma} \cite[Theorem 3.4h, p.\ 132]{o}\label{lemma1}
Let $q$ be analytic in $\mathbb{D}$ and let $\psi$ and  $\nu$ be analytic in a domain $U$ containing $q(\mathbb{D})$ with $\psi(w) \neq 0$ when $w \in q(\mathbb{D})$. Set
$Q(z):=zq'(z)\psi(q(z))$ and $h(z):=\nu(q(z))+Q(z)$.
Suppose that (i)  either $h$ is convex, or $Q$ is starlike univalent in $\mathbb{D}$ and (ii) $\RE \left({zh'(z)}/{Q(z)}\right) >0 $ for $z \in \mathbb{D}$.
If $p$ is analytic in $\mathbb{D}$, with  $p(0)=q(0)$, $p(\mathbb{D})\subseteq U$ and
\[\nu(p(z))+zp'(z) \psi(p(z)) \prec \nu(q(z))+zq'(z) \psi(q(z)),\]
then $p(z) \prec q(z)$, and $q$ is best dominant.
\end{lemma}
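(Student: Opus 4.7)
The plan is to recognize this as the standard Miller–Mocanu subordination principle applied to the function $\psi_0(r,s) := \nu(r) + s\,\psi(r)$. Under this substitution the hypothesis reads $\psi_0(p(z),zp'(z)) \prec \psi_0(q(z),zq'(z)) = h(z)$, and the task reduces to showing that $\psi_0$ is admissible with respect to the dominant $q$ and the range $\Omega := h(\mathbb{D})$. I would argue by contradiction: if $p \not\prec q$, the generalized Jack–Miller–Mocanu lemma furnishes points $z_0 \in \mathbb{D}$, $\zeta_0 \in \partial\mathbb{D}$, and a real number $m \geq 1$ with $p(z_0) = q(\zeta_0)$ and $z_0 p'(z_0) = m\zeta_0 q'(\zeta_0)$, whence
\[\psi_0(p(z_0),z_0 p'(z_0)) = \nu(q(\zeta_0)) + mQ(\zeta_0) = h(\zeta_0) + (m-1)Q(\zeta_0).\]
It then suffices to show this point lies outside $h(\mathbb{D})$, contradicting the subordination.

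For $m=1$ the expression equals $h(\zeta_0)$, which lies on $\partial h(\mathbb{D})$ because hypothesis (i) forces $h$ to be univalent in $\mathbb{D}$ (both convex and starlike analytic functions are univalent). For $m > 1$ I must show the translate $h(\zeta_0) + (m-1)Q(\zeta_0)$ escapes $h(\mathbb{D})$. Extending hypothesis (ii) to $\partial\mathbb{D}$ by continuity gives $\RE(\zeta_0 h'(\zeta_0)/Q(\zeta_0)) \ge 0$, meaning the outward normal $\zeta_0 h'(\zeta_0)$ to $h(\partial\mathbb{D})$ at $h(\zeta_0)$ makes an angle at most $\pi/2$ with the translation direction $Q(\zeta_0)$; so moving infinitesimally along $Q(\zeta_0)$ from $h(\zeta_0)$ leaves $h(\mathbb{D})$.

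The principal obstacle is promoting this infinitesimal outward statement to the global claim that the whole segment $\{h(\zeta_0) + t\,Q(\zeta_0) : t \in (0, m-1]\}$ misses $h(\mathbb{D})$. Here the two alternatives in (i) enter: if $h$ is convex, then $h(\mathbb{D})$ is a convex region, so the supporting halfplane at the boundary point $h(\zeta_0)$ with outward normal $\zeta_0 h'(\zeta_0)$ contains the entire ray in direction $Q(\zeta_0)$ and the claim is immediate; if instead $Q$ is starlike univalent, then since $m Q(\zeta_0) \notin Q(\mathbb{D})$ for $m>1$, and condition (ii) couples $h$ to $Q$ as a close-to-convex relation, the same separation argument transports from $Q(\mathbb{D})$ to $h(\mathbb{D})$. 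Once admissibility is secured the Miller–Mocanu principle delivers $p \prec q$; taking $p = q$ (which trivially satisfies the hypothesis) shows $q$ itself is a dominant, and the standard minimality argument---any dominant $\widetilde q$ must in particular dominate $p = q$, hence $q \prec \widetilde q$---establishes that $q$ is the best dominant.
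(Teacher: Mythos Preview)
The paper does not prove this lemma at all: it is quoted verbatim from Miller and Mocanu's monograph \cite[Theorem~3.4h, p.~132]{o} and used as a black box in the proofs of Theorems~\ref{lemth1}--\ref{thd4}. There is therefore no ``paper's own proof'' to compare your attempt against.

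As for the substance of your sketch, the overall strategy---reduce to admissibility and invoke the Jack--Miller--Mocanu boundary lemma---is exactly the route taken in the Miller--Mocanu book. Two places deserve tightening if you want a self-contained argument. First, the boundary lemma you invoke requires $q$ to be univalent and to extend continuously to $\partial\mathbb{D}$ (or at least to have well-defined radial limits at the relevant $\zeta_0$); the lemma as stated only assumes $q$ analytic in $\mathbb{D}$, so you should explain why hypotheses (i)--(ii) force the needed regularity or else restrict to $|z|<r$ and let $r\to 1$. Second, your treatment of the starlike alternative in (i) is where the real work lies and your phrase ``the same separation argument transports from $Q(\mathbb{D})$ to $h(\mathbb{D})$'' is not yet an argument: condition (ii) says $h$ is close-to-convex relative to $Q$, but deducing that $h(\zeta_0)+(m-1)Q(\zeta_0)\notin h(\mathbb{D})$ from $mQ(\zeta_0)\notin Q(\mathbb{D})$ requires either the explicit subordination-chain machinery in Miller--Mocanu or a careful winding-number computation. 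The convex case you handle is fine.
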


\begin{proof}[Proof of Theorem~\ref{lemth1}]
The function $q_{\beta}:\overline{\mathbb{D}} \to \mathbb{C}$  defined by
\[q_{\beta}(z)=1+\frac{2}{\beta} (\sqrt{1+z}-\log{(1+\sqrt{1+z})}+\log 2-1). \]
is  analytic and is a solution of the  differential equation $ 1+\beta z q_{\beta}'(z)=\sqrt{1+z}$. Consider the functions $\nu(w)=1$ and $\psi(w)=\beta$. The function $Q:\overline{\mathbb{D}}\to\mathbb{C}$ is defined by $Q(z)= z q_{\beta}'(z) \psi(q_{\beta}(z))=\beta z q_{\beta}'(z)$. Since $\sqrt{1+z}-1$ is starlike function in $\mathbb{D}$, it follows that function  $Q$ is starlike. Also note that the function $h(z)=\nu(q_{\beta}(z))+Q(z)$ satisfies $\RE (zh'(z)/Q(z)) > 0$ for  $z \in \mathbb{D}$. Therefore, by making use of  Lemma \ref{lemma1}, it follows that $1+ \beta  {zp'(z)} \prec 1+ \beta {z q_{\beta}'(z)}$ implies $p(z)\prec q_{\beta}(z)$.
Each of the conclusion in (a)-(f) is $p(z) \prec \mathcal{P} (z)$ for appropriate $\mathcal{P}$  and this holds if the subordination $q_{\beta}(z)\prec \mathcal{P} (z)$ holds. If $q_{\beta}(z)\prec \mathcal{P} (z)$, then  $\mathcal{P} (-1 )< q_{\beta}(-1)<q_{\beta}(1)< \mathcal{P} (1)$. This gives a necessary condition for $p \prec \mathcal{P} $ to hold. Surprisingly, this necessary condition is also sufficient. This can be seen by looking at the graph of the respective functions.

(a) On taking  $\mathcal{P}(z)=\sqrt{1+z}$, the inequalities  $q_{\beta}(-1)\geq0$ and $ q_{\beta}(1) \leq \sqrt{2}$ reduce to $\beta \geq \beta_1$ and $\beta \geq\beta_2$,
where
$\beta_1=2(1-\log 2)$  and  $\beta_2={2(\sqrt{2}-1+\log 2-\log(1+\sqrt{2}))}/{(\sqrt{2}-1)}$ respectively. Therefore,  the subordination $q_{\beta}(z)\prec \sqrt{1+z}$ holds only if
$\beta  \geq \max \left\{\beta_1, \,\,\beta_2\right\}=\beta_2$.

(b)
Consider $\mathcal{P}(z)=\varphi_0(z)$.
A simple calculation shows that the inequalities  $ q_{\beta}(-1)\geq \varphi_0(-1)$ and $ q_{\beta}(1) \leq \varphi_0(1)$  reduce to  $\beta \geq\beta_1$ and $\beta \geq \beta_2$, where $\beta_1= {2(1-\log 2)}/{(3-2\sqrt{2})}$ and $\beta_2=2(\sqrt{2}-1+\log 2-\log(1+\sqrt{2}))$ respectively.
Thus the subordination $q_{\beta}(z)\prec \varphi_0(z)$ holds only if $\beta \geq \max\{\beta_1, \beta_2\}=\beta_1$.

(c)
Consider $\mathcal{P}(z)=\varphi_{sin}(z)$. The  inequalities  $q_{\beta}(-1)\geq \varphi_{sin}(-1)$ and $q_{\beta}(1) \leq \varphi_{sin}(1)$ reduce to $\beta \geq \beta_1$ and  $\beta \geq\beta_2$,
where \[\beta_1=\frac{2(1-\log 2)}{\sin (1)}\,\,\mbox{and} \,\,\beta_2= \frac{2(\sqrt{2}-1+\log 2-\log(1+\sqrt{2}))}{\sin (1)}\] respectively. The subordination $q_{\beta}(z)\prec \varphi_{Sin}(z)$ holds if $\beta \geq \max\{\beta_1, \beta_2\}=\beta_1$.

(d)
Consider $\mathcal{P}(z)=\varphi_{\mbox{\tiny}\leftmoon}(z)$.
The  inequalities $ q_{\beta}(-1)\geq \varphi_{\mbox{\tiny}\leftmoon}(-1)$ and  $ q_{\beta}(1) \leq \varphi_{\mbox{\tiny}\leftmoon}(1)$ give $\beta \geq \beta_1$ and $ \beta \geq \beta_2$, where $\beta_1={(2+\sqrt{2})(1-\log 2)}$  and  $ \beta_2=\sqrt{2}(\sqrt{2}-1+\log 2-\log{(1+\sqrt{2})})$ respectively.
The subordination $q_{\beta}(z)\prec \varphi_{\mbox{\tiny}\leftmoon}(z)$ holds if $\beta \geq \max\{\beta_1, \beta_2\}=\beta_1$.

(e)
Consider $\mathcal{P}(z)=\varphi_C(z)$.
From the inequalities $\varphi_C(-1)\leq q_{\beta}(-1)$ and $q_{\beta}(1) \leq \varphi_C(1)$,  we get $\beta \geq 3(1-\log 2)$ and  $ \beta \geq 2(\sqrt{2}-1+\log 2-\log(1+\sqrt{2}))$ respectively. Thus the subordination $q_{\beta}(z)\prec \varphi_C(z)$ holds if
$\beta \geq \max \left\{3(1-\log 2), \,\,2(\sqrt{2}-1+\log 2-\log(1+\sqrt{2}))\right\}=3(1-\log 2)$.

(f)
Consider  $\mathcal{P}(z)={(1+Az)}/{(1+Bz)}$.
From the inequalities   $q_{\beta}(-1 )\geq {(1-A)}/{(1-B)}$ and $q_{\beta}(1) \leq  {(1+A)}/{(1+B)}$, we note that $\beta\geq \beta_1$ and $\beta \geq \beta_{2}$, where
\[\beta_1=\frac{2(1-B)(1-\log 2)}{A-B}\,\,\mbox{and} \,\,\beta_{2}=\frac {2(1+B)(\sqrt{2}-1+\log 2-\log(1+\sqrt{2}))}{A-B}\] respectively. A simple calculation gives $\beta_1-\beta_{2}=2(1-\log 2)+(1+B)(\log (1+\sqrt{2})-\sqrt{2})$. We note that $\beta_1-\beta_{2}\geq 0$ if $B<B_0$ and $\beta_1-\beta_{2}\leq 0$ if $B>B_0$ where
 \[B_0=\frac{2-\log 4-\sqrt{2}+\log(1+\sqrt{2})}{\sqrt{2}-\log(1+\sqrt{2}+1)}.\] The necessary subordination $p(z) \prec (1+Az)/(1+Bz)$ holds if  $\beta \geq \max \{\beta_1, \,\beta_{2}\}$.
\end{proof}

The subordination results in part (a) and (f) in Theorem \ref{lemth1} were also investigated by the authors in \cite[ Lemma 2.1, p.\ 1019]{c} and   \cite[Lemma 2.1, p.\ 3]{svr}, but their results were non-sharp.

Next result gives a bound on $\beta$ so that $1+\beta {z p'(z)}/{p(z)} \prec \sqrt{1+z}$ implies $p$ is subordinate to some well-known starlike functions.
\begin{theorem}\label{thd}
Let  the function $p$ be analytic in $\mathbb{D}$, $p(0)=1$ and  $1+\beta {z p'(z)}/{p(z)} \prec \sqrt{1+z}$.  Then the  following subordination results hold:
\begin{itemize}
\item[(a)] If  $\beta \geq \frac{2(\log 2-1)}{\log(2\sqrt{2}-2})\approx 3.26047$,  then $p(z) \prec \varphi_0(z)$.
\item[(b)] If   $\beta \geq \frac{2 \left(\sqrt{2}-1+\log (2)-\log \left(\sqrt{2}+1\right)\right)}{\log (1+\sin (1))}\approx 0.740256 $,  then $p(z)\prec \varphi_{sin}(z)$.
\item[(c)] If   $\beta \geq \frac{2 (\log 2 -1)}{\log (\sqrt{2}-1)} \approx 0.696306$,   then $p(z) \prec \varphi_{\mbox{\tiny}\leftmoon}(z)$.
\item[(d)] If  $\beta \geq 2(1-\log 2)\approx0.613706$,  then $p(z) \prec e^z$.
\item[(e)] If  $-1<B<A<1$ and $\beta \geq \max \{\beta_1, \,\beta_2\}$ where
\[\beta_1=\frac{2(1-\log 2)}{\log(1-B)-\log(1-A)} \,\,\,\mbox{and}\,\,\, \beta_2=\frac{2(\sqrt{2}-1+\log 2-\log(1+\sqrt{2}))}{\log(1+A)-\log(1+B)},\]
then $p(z) \prec {(1+Az)}/{(1+Bz)}$.
\end{itemize}
The bounds on $\beta$ are best possible.
\end{theorem}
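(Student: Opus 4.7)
The strategy is to imitate the proof of Theorem~\ref{lemth1}, replacing the additive differential equation $1+\beta zq'(z)=\sqrt{1+z}$ with its multiplicative analogue $1+\beta zq'(z)/q(z)=\sqrt{1+z}$. Separating variables, this becomes $\beta q'(z)/q(z)=(\sqrt{1+z}-1)/z$, and integrating with $q(0)=1$ (using the substitution $u=\sqrt{1+t}$) gives the explicit solution
\[ q_\beta(z)=\exp\!\left(\frac{2}{\beta}\bigl(\sqrt{1+z}-\log(1+\sqrt{1+z})+\log 2-1\bigr)\right). \]
I would then invoke Lemma~\ref{lemma1} with $\nu(w)\equiv 1$ and $\psi(w)=\beta/w$. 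This makes $Q(z)=\beta zq_\beta'(z)/q_\beta(z)=\sqrt{1+z}-1$, which is exactly the starlike function already used in Theorem~\ref{lemth1}, and the verification $\RE(zh'(z)/Q(z))>0$ carries over verbatim. Hence $1+\beta zp'(z)/p(z)\prec 1+\beta zq_\beta'(z)/q_\beta(z)=\sqrt{1+z}$ implies $p\prec q_\beta$.

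Next I would, for each target $\mathcal{P}$ in (a)--(e), locate the smallest $\beta$ for which $q_\beta\prec\mathcal{P}$. Mimicking Theorem~\ref{lemth1}, the necessary condition $\mathcal{P}(-1)\le q_\beta(-1)<q_\beta(1)\le\mathcal{P}(1)$ turns out to be sufficient, as may be confirmed from the graphs. Here the boundary values are
\[ q_\beta(-1)=\exp\!\left(-\tfrac{2(1-\log 2)}{\beta}\right),\qquad q_\beta(1)=\exp\!\left(\tfrac{2(\sqrt{2}-1+\log 2-\log(1+\sqrt{2}))}{\beta}\right). \]
The inequality $q_\beta(-1)\ge\mathcal{P}(-1)$ produces a bound $\beta\ge\beta_1$, the inequality $q_\beta(1)\le\mathcal{P}(1)$ a bound $\beta\ge\beta_2$, and one takes $\max\{\beta_1,\beta_2\}$. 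For (a), using $\varphi_0(-1)=2\sqrt{2}-2$ one gets $\beta_1=2(\log 2-1)/\log(2\sqrt{2}-2)$, dominating $\beta_2$. For (b)--(d), the values $\varphi_{sin}(\pm 1)=1\pm\sin 1$, $\varphi_{\mbox{\tiny}\leftmoon}(\pm 1)=\pm 1+\sqrt{2}$ and $e^{\pm 1}$ yield the stated thresholds after comparing $\beta_1,\beta_2$. In (e) the Janowski datum $\mathcal{P}(\pm 1)=(1\pm A)/(1\pm B)$ gives precisely the two stated $\beta_i$; without a cleaner sign analysis (analogous to the $B_0$ splitting in Theorem~\ref{lemth1}(f)) the bound is just left in the form $\max\{\beta_1,\beta_2\}$.

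Sharpness in each case is automatic from the extremal $q_\beta$: at the threshold value of $\beta$, either $q_\beta(-1)=\mathcal{P}(-1)$ or $q_\beta(1)=\mathcal{P}(1)$, so no smaller $\beta$ can work. The only nontrivial point is the passage from the boundary inequality to the full subordination $q_\beta(\mathbb{D})\subseteq\mathcal{P}(\mathbb{D})$ — one must know that matching endpoints on $[-1,1]$ already forces the curvy image of $q_\beta$ inside that of $\mathcal{P}$. As in Theorem~\ref{lemth1}, this is the main obstacle and is handled by a direct geometric/graphical inspection using that $q_\beta$ is real and monotone on $(-1,1)$ and both functions are univalent with images symmetric about the real axis; everything else is bookkeeping.
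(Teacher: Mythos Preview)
Your proposal is correct and follows essentially the same approach as the paper: you construct the same explicit solution $q_\beta(z)=\exp\bigl(\tfrac{2}{\beta}(\sqrt{1+z}-\log(1+\sqrt{1+z})+\log 2-1)\bigr)$, apply Lemma~\ref{lemma1} with $\nu(w)=1$, $\psi(w)=\beta/w$, $Q(z)=\sqrt{1+z}-1$, and then reduce each part to the real-axis endpoint comparison $\mathcal{P}(-1)\le q_\beta(-1)$, $q_\beta(1)\le\mathcal{P}(1)$, with sufficiency checked graphically. The paper's proof is identical in structure but more terse, simply referring back to Theorem~\ref{lemth1} for the endpoint analysis you spell out.
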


\begin{proof}
The  function $q_{\beta}:\overline{\mathbb{D}} \to \mathbb{C}$ defined by
\[q_{\beta}(z)=\exp\left({\frac{2}{\beta} (\sqrt{1+z}-\log{(1+\sqrt{1+z})}+\log 2-1)}\right)\]
is  analytic and is a solution of  the differential equation $1+\beta z {q_{\beta}'(z)}/{q_{\beta}(z)} =\sqrt{1+z}$.
Define the functions $\nu(w)=1$ and $\psi(w)=\beta/w $. The function $Q:\overline{\mathbb{D}}\to\mathbb{C}$ defined by $Q(z):= z q_{\beta}'(z) \psi(q_{\beta}(z))=\beta z q_{\beta}'(z)/q_{\beta}(z)=\sqrt{1+z}-1$ is starlike in $\mathbb{D}$. The function  $h(z):= \nu(q_{\beta}(z))+Q(z)=1+Q(z)$ satisfies $\RE (zh'(z)/Q(z))>0$ for $ z \in\mathbb{D}$. Therefore, by using Lemma \ref{lemma1},  we see that the subordination
\[1+ \beta  \frac{zp'(z)}{p(z)} \prec 1+ \beta \frac{z q_{\beta}'(z)}{q_{\beta}(z)}\]
 implies $p(z)\prec q_{\beta}(z)$. As the similar  lines of the proof of Theorem \ref{lemth1}, the proofs of  parts (a)-(e) are completed.\qedhere
\end{proof}
The subordination in  part (d) and (e) of Theorem \ref{thd} were earlier investigated in \cite[Theorem 2.16(c), p.\ 10]{m} and \cite[Lemma 2.3, p.\ 5]{svr} where non-sharp results were obtained.

Next, we determine a bound on $\beta$ so that $1+\beta {z p'(z)}/{p^2(z)} \prec \sqrt{1+z}$ implies $p$ is subordinate to several well-known starlike functions.
\begin{theorem}\label{thd4}
Let  the function $p$ be analytic in $\mathbb{D}$, $p(0)=1$ and $1+\beta {z p'(z)}/{p^2(z)} \prec \sqrt{1+z}$.
Then the following subordination results hold for sharp bound of $\beta$:
\begin{itemize}
\item[(a)] If  $\beta \geq 4(1+\sqrt{2})(1-\log 2)\approx 2.96323$,  then $p(z) \prec \varphi_0(z)$.
\item[(b)] If   $\beta \geq \frac{2(1+\sin (1))(\sqrt{2}-\log(1+\sqrt{2})+\log 2-1)}{\sin (1)}\approx 0.989098$,
  then $p(z) \prec \varphi_{sin}(z)$.
\item[(c)] If   $\beta \geq (2+\sqrt{2})(\sqrt{2}-\log(1+\sqrt{2})+\log 2-1)\approx 0.771568$,   then $p(z) \prec \varphi_{\mbox{\tiny}\leftmoon}(z)$.
\item[(d)] Let  $-1<B<A<1$ and $A_0=\frac{2-\log 4-\sqrt{2}+\log(1+\sqrt{2})}{\sqrt{2}-\log(1+\sqrt{2}+1)}\approx 0.151764$. If either
\begin{itemize}
\item[(i)]  $A>A_0$  and $\beta \geq \frac{2(1-A)(1-\log 2)}{A-B}\approx 0.613706\frac{1-A}{A-B}$ or
\item[(ii)] $A<A_0$ and $\beta \geq \frac{2(1+A)(\sqrt{2}-1+\log 2-\log(1+\sqrt{2}))}{A-B}\approx0.451974\frac{1+A}{A-B}$, \\then $p(z) \prec {(1+Az)}/{(1+Bz)}$.
\end{itemize}
\end{itemize}
\end{theorem}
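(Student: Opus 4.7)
The plan is to mirror the strategies used for Theorems~\ref{lemth1} and~\ref{thd}. The first task is to construct a univalent analytic function $q_\beta$ on $\overline{\mathbb{D}}$ satisfying the Bernoulli-type differential equation
\[
1+\beta\,\frac{zq_\beta'(z)}{q_\beta^{2}(z)}=\sqrt{1+z},\qquad q_\beta(0)=1.
\]
Since $zq_\beta'(z)/q_\beta^{2}(z)=-z(1/q_\beta)'(z)$, the equation rearranges to $(1/q_\beta)'(z)=-(\sqrt{1+z}-1)/(\beta z)$; integrating from $0$ to $z$ and evaluating the antiderivative via the substitution $u=\sqrt{1+t}$ (which turns $(\sqrt{1+t}-1)/t$ into $1/(u+1)$) produces the explicit formula
\[
q_\beta(z)=\left(1-\frac{2}{\beta}\bigl(\sqrt{1+z}-\log(1+\sqrt{1+z})+\log 2-1\bigr)\right)^{-1}.
\]

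Next I would apply Lemma~\ref{lemma1} with $\nu(w)=1$ and $\psi(w)=\beta/w^{2}$. With this choice $Q(z)=zq_\beta'(z)\psi(q_\beta(z))=\sqrt{1+z}-1$ is the classical starlike function, and $h(z)=\nu(q_\beta(z))+Q(z)=\sqrt{1+z}$. Because $h'=Q'$, the hypothesis $\RE(zh'(z)/Q(z))>0$ in Lemma~\ref{lemma1} reduces to the starlikeness of $Q$, which is immediate. The given subordination $1+\beta zp'(z)/p^{2}(z)\prec\sqrt{1+z}$ is precisely $\nu(p(z))+zp'(z)\psi(p(z))\prec\nu(q_\beta(z))+zq_\beta'(z)\psi(q_\beta(z))$, so Lemma~\ref{lemma1} delivers $p\prec q_\beta$ with $q_\beta$ as the best dominant.

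It then suffices to establish $q_\beta\prec\mathcal{P}$ for each target $\mathcal{P}$ in parts~(a)--(d). Exactly as in the proof of Theorem~\ref{lemth1}, the necessary condition $\mathcal{P}(-1)\leq q_\beta(-1)\leq q_\beta(1)\leq\mathcal{P}(1)$ is also sufficient, because both image domains are symmetric about the real axis and starlike with respect to the real point $1$. The endpoint values work out to
\[
q_\beta(-1)=\frac{\beta}{\beta+2(1-\log 2)},\qquad q_\beta(1)=\frac{\beta}{\beta-2\bigl(\sqrt{2}-\log(1+\sqrt{2})+\log 2-1\bigr)},
\]
and each choice $\mathcal{P}\in\{\varphi_{0},\varphi_{sin},\varphi_{\mbox{\tiny}\leftmoon},(1+Az)/(1+Bz)\}$ turns the two endpoint conditions into two linear lower bounds $\beta\geq\beta_{1}$ and $\beta\geq\beta_{2}$; the sharp threshold is $\max\{\beta_{1},\beta_{2}\}$, and sharpness is witnessed by the extremal choice $p=q_\beta$.

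The principal obstacle is the case split in~(d). Writing $c_{1}=1-\log 2$ and $c_{2}=\sqrt{2}-1+\log 2-\log(1+\sqrt{2})$, the two endpoint inequalities yield $\beta_{1}=2(1-A)c_{1}/(A-B)$ and $\beta_{2}=2(1+A)c_{2}/(A-B)$. The sign of $\beta_{1}-\beta_{2}$ is that of the linear function $(c_{1}-c_{2})-A(c_{1}+c_{2})$, which vanishes at $A=A_{0}=(c_{1}-c_{2})/(c_{1}+c_{2})$, producing the dichotomy in subcases~(i) and~(ii). A secondary, but routine, task is to verify for the non-M\"obius targets of (a)--(c) that the endpoint comparison already forces $q_\beta(\mathbb{D})\subseteq\mathcal{P}(\mathbb{D})$; this is a direct inspection of the boundary parameterizations of $\varphi_{0}$, $\varphi_{sin}$, and $\varphi_{\mbox{\tiny}\leftmoon}$ against the image curve of $q_\beta$.
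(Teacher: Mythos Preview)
Your proposal is correct and follows the paper's own argument essentially step for step: the same extremal solution $q_\beta$, the same choice $\nu(w)=1$, $\psi(w)=\beta/w^{2}$ in Lemma~\ref{lemma1}, the same reduction to the endpoint inequalities $\mathcal{P}(-1)\le q_\beta(-1)$ and $q_\beta(1)\le\mathcal{P}(1)$. In fact you supply more detail than the paper does (the explicit endpoint values and the computation of $A_0=(c_1-c_2)/(c_1+c_2)$), since the paper simply refers back to the proof of Theorem~\ref{lemth1} for parts~(a)--(d).
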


\begin{proof}
The  function $q_{\beta}:\overline{\mathbb{D}} \to \mathbb{C}$ defined by
\[q_{\beta}(z)=\left(1-\frac{2}{\beta}\left (\sqrt{1+z}-\log(1+\sqrt{1+z})+\log 2 -1\right)\right)^{-1}\] is clearly analytic and is a solution of the differential equation  $1+\beta z {q_{\beta}'(z)}/{q_{\beta}^2(z)} =\sqrt{1+z}$. Define the functions $\nu(w)=1$ and $\psi(w)=\beta/w^2 $. The function $Q:\overline{\mathbb{D}}\to\mathbb{C}$ defined by $Q(z)= z q_{\beta}'(z) \psi(q_{\beta}(z))=\beta z q_{\beta}'(z)/q_{\beta}^2(z)=\sqrt{1+z}-1$  is starlike in $\mathbb{D}$, $Q$ is starlike function. The function  $h(z):= \nu(q_{\beta}(z))+Q(z)=\nu(q_{\beta}(z))+Q(z)$ satisfies the inequality $\RE (zh'(z)/Q(z))>0$ for $ z \in\mathbb{D}$. Therefore, by using Lemma \ref{lemma1},  we see that the subordination \[1+ \beta  \frac{zp'(z)}{p^2(z)} \prec 1+ \beta \frac{z q_{\beta}'(z)}{q_{\beta}^2(z)}\] implies $p(z)\prec q_{\beta}(z)$. As the similar  lines of the proof of Theorem \ref{lemth1}, the proofs of  parts (a)-(d) are obtained.
\qedhere
\end{proof}

The subordination in part (d) of Theorem \ref{thd4} was earlier investigated in  \cite[Lemma 2.4, p.\ 6]{svr}  where non-sharp result was obtained.

 \end{document}